\documentclass[12pt]{amsart}

\usepackage{amsmath,amssymb,amsbsy,amsfonts,amsthm,latexsym,
         amsopn,amstext,amsxtra,euscript,amscd,amsthm, mathtools, dsfont, fullpage}
\usepackage[utf8]{inputenc}
\usepackage{tikz}
\usepackage{pgf}
\usepackage{url}
\date{}

\allowdisplaybreaks

\numberwithin{equation}{section}

\newtheorem{lem}{Lemma}
\newtheorem{lemma}[lem]{Lemma}

\newtheorem{prop}{Proposition}

\newtheorem{thm}{Theorem}
\newtheorem{theorem}[thm]{Theorem}

\newtheorem*{example*}{Example}


\title[A discrete approach to broken stick problems]{MacMahon Partition Analysis: a discrete approach to broken stick problems}
\author{William Verreault}

\address{
William Verreault \\
D\'epartement de math\'ematiques et de statistique \\ 
Universit\'e Laval   \\ 
Qu\'ebec G1V 0A6, Canada}
\email{william.verreault.2@ulaval.ca}

\begin{document}

\begin{abstract}
     We propose a discrete approach to solve problems on forming polygons from broken sticks, which is akin to counting polygons with sides of integer length subject to certain Diophantine inequalities. Namely, we use MacMahon's Partition Analysis to obtain a generating function for the size of the set of segments of a broken stick subject to these inequalities. 
In particular, we use this approach to show that for $n\geq k\geq 3$, the probability
that a $k$-gon cannot be formed from a stick broken into $n$ parts is given by $n!$ over a product of linear combinations of partial sums of generalized Fibonacci numbers, a problem which proved to be very hard to generalize in the past.
\end{abstract}

\maketitle
\section{Introduction}
Partition Analysis is a computational method to solve problems that involve linear homogeneous Diophantine inequalities and equations. Introduced by MacMahon in his famous book \textit{Combinatory Analysis} \cite{macmahon2001combinatory}, and later used by Stanley in his proof of the Anand–Dumir–Gupta conjecture \cite{stanley_linear_1973}, the method died down for decades until Andrews et al. devoted a series of papers to applications of Partition Analysis. They started with a new proof of the Lecture Hall Partition Theorem of Bousquet-Mélou and Eriksson \cite{bousquet-melou_lecture_1997} and even provided a computer algebra implementation of the method (see \cite{sagan_macmahons_1998} for a great exposition to the history and ideas of MacMahon's Partition Analysis). 

On the other hand, in broken stick problems, one breaks a stick at random at $n-1$ points to obtain $n$ pieces and then asks diverse geometric probability questions involving this broken stick. Henceforth, we will focus on forming polygons from these pieces. The basis of these questions is the famous problem from the Mathematical Tripos on finding the probability that a triangle can be formed from a stick broken at random in three segments, which was quickly generalized to forming an $n$-gon from the $n$ pieces of a broken stick. The probability in the latter case is simply $1-n/2^{n-1}$, and the most popular solution to this problem comes from the note \textit{The Broken Spaghetti Noodle} of D'Andrea and Gómez \cite{dandrea_broken_2006}. For more background and motivation on these kinds of problems, an interested reader may wish to read the introduction of \cite{Verreault}.

\subsection{A generalization}
    It is natural to generalize these problems by using $3\leq k\leq n$ pieces to form a $k$-gon out of the $n$ pieces, but they turn out to be much harder than expected and only a few have found an answer. In project reports by the Illinois Geometry Lab \cite{kong2013random, page_calculus_2015}, two extreme cases were considered, namely the probability that given a stick broken up into $n$ pieces, there exist three segments that can form a triangle (we call it the ``there exist'' problem), as well as the probability that all choices of three segments can form a triangle (the ``for all'' problem). It was stated without proof that the answers are respectively \begin{equation} \label{Illinois}
    1-n!\prod_{j=2}^n (F_{j+2}-1)^{-1} \text{ and } \binom{2n-2}{n}^{-1},\end{equation} where $F_j$ is the $j$th Fibonacci number. 
    Crowdmath's $2017$ project\footnote{Crowdmath is an open project for high school and college students to collaborate on large research projects and is based on the model of Polymath. $2017$'s project was called \textit{The Broken Stick Problem}.} answered many broken stick type questions, but one of the only problems they could not solve had to do with forming polygons from a broken stick. 
   However, the general case of the ``for all'' problem was solved recently by using results on random divisions of an interval and order statistics.
       \begin{thm}{}{\cite{Verreault}}
    Let $n\geq k\geq 3$ be positive integers. The probability that every choice of $k$ segments from a stick broken up into $n$ parts will form a $k$-gon is given by
$$
\frac{n(n-1)\cdots(n-k+3)}{(n-k+2)}\sum_{j=1}^{n-k+2}\frac{(-1)^{j+1}}{j^{k-3}}\binom{n-k+2}{j}\left(\frac{n-k+2}{j}+1\right)_{k-2}^{-1},
$$
where  $$\left(\frac{n-k+2}{j}+1\right)_{k-2}=\prod_{i=0}^{k-3}\left(\frac{n-k+2}{j}+1+i\right)$$ stands for the rising factorial function or Pochhammer symbol.
    \end{thm}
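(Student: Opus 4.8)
\medskip
\noindent\textbf{Proof proposal.}
The plan is to first turn the ``for all'' geometric event into a single inequality among the order statistics of the piece lengths, and then to evaluate that probability using the exponential representation of a random division of an interval together with R\'enyi's representation of exponential order statistics. Write $L_1,\dots,L_n$ for the piece lengths, so that $(L_1,\dots,L_n)$ is exchangeable and uniform on the simplex $\{\ell_i\ge 0,\ \sum_i\ell_i=1\}$, and let $L_{(1)}\le\cdots\le L_{(n)}$ be its order statistics. A $k$-subset with lengths $a_0\ge a_1\ge\cdots\ge a_{k-1}$ fails to form a $k$-gon exactly when $a_0\ge a_1+\cdots+a_{k-1}$; if some $k$-subset fails, then replacing $a_0$ by the global maximum $L_{(n)}\ge a_0$ and replacing $a_1,\dots,a_{k-1}$ by the $k-1$ globally smallest pieces (whose sum is at most $a_1+\cdots+a_{k-1}$) still fails. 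Hence \emph{some} $k$-subset fails if and only if the specific subset consisting of the largest piece and the $k-1$ smallest pieces fails, so the event that every choice of $k$ segments forms a $k$-gon is exactly $\{L_{(n)}<L_{(1)}+\cdots+L_{(k-1)}\}$ (for $k=n$ this is the familiar ``no piece exceeds $1/2$''), and it remains to compute its probability.

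Next I would pass to exponentials. If $W_1,\dots,W_n$ are i.i.d.\ $\mathrm{Exp}(1)$ with sum $S$, then $(W_1/S,\dots,W_n/S)$ has the law of $(L_1,\dots,L_n)$, and since the inequality above is scale-invariant it suffices to compute $\mathbb{P}\big(W_{(n)}<W_{(1)}+\cdots+W_{(k-1)}\big)$. Substituting R\'enyi's representation $W_{(i)}=\sum_{j=1}^{i}E_j/(n-j+1)$ with $E_1,\dots,E_n$ i.i.d.\ $\mathrm{Exp}(1)$ and cancelling the terms common to both sides, the inequality becomes
\begin{equation*}
  \sum_{j=k}^{n}\frac{E_j}{n-j+1}\ <\ \sum_{j=1}^{k-1}\frac{(k-1-j)\,E_j}{n-j+1},
\end{equation*}
in which the $j=k-1$ summand is zero. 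The two sides involve disjoint blocks of the $E_j$ (indices $\{k,\dots,n\}$ on the left, $\{1,\dots,k-2\}$ on the right) and hence are independent; call them $A$ and $B$. After the relabelling $G_m:=E_{n-m+1}$, the left side is $A=\sum_{m=1}^{n-k+1}G_m/m$, a sum of independent exponentials with the integer rates $1,2,\dots,n-k+1$, and the decisive remark is that such a sum is distributed as the maximum of $n-k+1$ i.i.d.\ $\mathrm{Exp}(1)$ variables, so that $\mathbb{P}(A\le x)=(1-e^{-x})^{n-k+1}$.

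Conditioning on $B$, the target probability is $\mathbb{E}\big[(1-e^{-B})^{n-k+1}\big]$; expanding by the binomial theorem and inserting the Laplace transform $\mathbb{E}[e^{-lB}]=\prod_{j=1}^{k-2}\big(1+l\,\tfrac{k-1-j}{n-j+1}\big)^{-1}$ gives
\begin{equation*}
  \sum_{l=0}^{n-k+1}(-1)^l\binom{n-k+1}{l}\,\frac{n(n-1)\cdots(n-k+3)}{\prod_{i=1}^{k-2}\big(n-k+2+i(l+1)\big)} .
\end{equation*}
The last step is purely algebraic: put $j=l+1$, use $\binom{n-k+1}{j-1}=\frac{j}{n-k+2}\binom{n-k+2}{j}$, and factor $\prod_{i=1}^{k-2}\big(n-k+2+ij\big)=j^{k-2}\big(\tfrac{n-k+2}{j}+1\big)_{k-2}$ to recover the stated closed form. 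I expect the genuine work to be in the second paragraph: executing the R\'enyi substitution cleanly, checking that the exponentials really do split into two independent groups after cancellation, and recognizing the first group as a maximum of i.i.d.\ exponentials; once that is in place everything else is routine, and the reduction in the first paragraph, though it is the conceptual crux, is elementary.
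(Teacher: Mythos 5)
Your proof is correct: the reduction of the ``for all'' event to $\{L_{(n)}<L_{(1)}+\cdots+L_{(k-1)}\}$ is valid, the R\'enyi substitution and cancellation do split the exponentials into the independent blocks you describe, the sum $\sum_{m=1}^{n-k+1}G_m/m$ is indeed distributed as the maximum of $n-k+1$ i.i.d.\ exponentials, and the resulting alternating sum reduces algebraically to the stated closed form (it also reproduces $1-n/2^{n-1}$ at $k=n$ and $\binom{2n-2}{n}^{-1}$ at $k=3$). Note that this paper does not prove the theorem itself but imports it from \cite{Verreault}, whose method is described precisely as using random divisions of an interval and order statistics, so your argument follows essentially the same route as the cited source rather than the Partition Analysis approach developed here.
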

It was pointed out that the method could be adapted to answer similar questions such as the ``there exist'' problem, but in practice it was much harder to apply.

In this paper, we show that MacMahon's Partition Analysis is well suited to solve broken stick problems by considering a discrete setting and exhibiting a connection with systems of linear Diophantine inequalities. This discrete setting is more generally related to counting polygons with sides of integer length, as studied in \cite{andrews_macmahons_2001} for example. In particular, we apply it to solve the ``there exist'' problem in full generality. As a purely aesthetic choice, we will consider the complimentary probability that no choices of $k$ segments from a stick broken up into $n$ parts will form a $k$-gon.

\subsection{Preliminaries and the ``there exist'' problem}
We define the generalized Fibonacci numbers $F_n^{(k)}$ as it is typically done:
$$
F_n^{(k)}:=\left.
\begin{cases}
    0     &0 \leq n\leq k-2,\\
    1     &n=k-1,\\
    \sum_{i=1}^kF_{n-i}^{(k)}   &n\geq k.
\end{cases}
\right.
$$
Next, let 
$$
f_{k}(i):=\left.
\begin{cases}
    0     &0 \leq i\leq k-2,\\
    \sum_{n=k-1}^i F_n^{(k)}     &i\geq k-1
\end{cases}
\right.
$$
be restricted partial sums of the $F_n^{(k)}$,
and let $$
g_k(j):=1+\sum_{\ell=2}^j f_{k}(n-\ell) \qquad (2\leq j\leq n-k+1)
$$
and
$$
h_{k}(\ell):=f_{k}(n)+\sum_{j=2}^\ell g_k(k+1-j) \qquad (2\leq \ell \leq k-2).
$$
The preceding definitions allow us to state our main result in terms of linear combinations of generalized Fibonacci numbers.

\begin{theorem} \label{Main}
Let $n\geq k\geq 3$ be positive integers. The probability that no choices of $k$ segments from a stick broken up into $n$ parts will form a $k$-gon is given by 
$$
\frac{n!}{f_{k-1}\left(k-2\right)\cdots f_{k-1}\left(n\right)h_{k-1}(2)\cdots h_{k-1}(k-2)},
$$
where the $h_{k-1}(\ell)$ are to be removed for $k=3$.
\end{theorem}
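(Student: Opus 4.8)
The plan is to turn the probabilistic question into a lattice‑point count in a polyhedral cone, evaluate the associated generating function with MacMahon's $\Omega_{\geq}$ operator, and read the probability off the denominator.

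\textbf{Step 1 (reduction to a cone).} Let $x_1\le x_2\le\cdots\le x_n$ be the sorted piece lengths. The first point to establish is that \emph{no} $k$-subset forms a $k$-gon if and only if $x_l\ge x_{l-1}+x_{l-2}+\cdots+x_{l-k+1}$ for every $k\le l\le n$. These ``window'' inequalities are necessary, since the subset of ranks $\{l-k+1,\dots,l\}$ has largest element $x_l$; and they are sufficient because for a $k$-subset of ranks $i_1<\cdots<i_k$ one has $i_j\le i_k-(k-j)$, so $\sum_{j<k}x_{i_j}\le\sum_{m=1}^{k-1}x_{i_k-m}\le x_{i_k}$ by monotonicity and the window inequality at $l=i_k$. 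Since the event is symmetric and the lengths are almost surely distinct, the probability in question is $n!$ times the probability that $(x_1,\dots,x_n)$ lies in the cone $C\subset\R^n_{\geq0}$ with facets $x_1\ge0$, $x_2\ge x_1,\dots,x_{k-1}\ge x_{k-2}$, and $x_l\ge x_{l-1}+\cdots+x_{l-k+1}$ for $k\le l\le n$ ($n$ facets in all; $x_l\ge x_{l-1}$ is automatic for $l\ge k$). As the probability that a uniformly random point of the simplex lands in a pointed rational cone $C$ is the leading Laurent coefficient of its lattice-point series,
\[
\Pr(\text{no }k\text{-gon})\;=\;n!\,\lim_{q\to1^-}(1-q)^n\,\Phi_C(q),\qquad \Phi_C(q):=\sum_{x\in C\cap\Z^n}q^{x_1+\cdots+x_n}.
\]

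\textbf{Step 2 (the $\Omega$ computation).} The $l$-th facet of $C$ bounds $x_l$ below by a nonnegative combination of $x_1,\dots,x_{l-1}$, so the constraint matrix $A$ (with $z=Ax$ ranging over $\Z^n_{\geq0}$) is lower triangular with $1$'s on the diagonal. Introducing a catalytic variable for each facet and applying $\Omega_{\geq}$ to the resulting product of $n$ geometric series, this triangular structure lets one eliminate the catalytic variables one at a time, each elimination contributing exactly one factor $(1-q^{d_j})^{-1}$ and leaving the numerator equal to $1$. Thus $\Phi_C(q)=\prod_{j=1}^n(1-q^{d_j})^{-1}$ for positive integers $d_j$; equivalently, $C\cap\Z^n$ is the unimodular cone on the columns $r^{(1)},\dots,r^{(n)}$ of $A^{-1}$, and $d_j=|r^{(j)}|$ is the coordinate sum of the $j$-th ray. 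Combining with Step~1, $\Pr(\text{no }k\text{-gon})=n!/(d_1\cdots d_n)$, and it remains to identify the $d_j$.

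\textbf{Step 3 (identifying the $d_j$).} The vector $(d_1,\dots,d_n)$ solves $A^{\!\top}d=\mathbf 1$, which unwinds (with $d_i:=0$ for $i>n$) to $d_i=1+d_{i+1}+\cdots+d_{i+k-1}$ for $k-2\le i\le n$, and to the truncated recursion $d_i=1+d_{i+1}+\sum_{m=k-i}^{k-1}d_{i+m}$ for $1\le i\le k-3$. Solving the first family from the top ($d_n=1$) downward and recognizing the order-$(k-1)$ Fibonacci recursion — a window sum has $k-1$ terms — gives $d_i=f_{k-1}(n-i+k-2)$ for $k-2\le i\le n$, so $\{d_i:k-2\le i\le n\}=\{f_{k-1}(k-2),\dots,f_{k-1}(n)\}$. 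Substituting the values $d_k=f_{k-1}(n-2),\ d_{k+1}=f_{k-1}(n-3),\dots$ into the truncated recursions yields $d_i-d_{i+1}=g_{k-1}(i+1)$ for $1\le i\le k-3$; telescoping down from $d_{k-2}=f_{k-1}(n)$ gives $d_i=h_{k-1}(k-1-i)$, so $\{d_i:1\le i\le k-3\}=\{h_{k-1}(2),\dots,h_{k-1}(k-2)\}$, an empty set when $k=3$. Multiplying all the $d_j$ gives $d_1\cdots d_n=f_{k-1}(k-2)\cdots f_{k-1}(n)\cdot h_{k-1}(2)\cdots h_{k-1}(k-2)$, which is the claimed formula.

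\textbf{Where the difficulty lies.} Steps~1 and~2 are structural once the generalized-Fibonacci characterization of ``no $k$-gon'' is in hand. The substance is in Step~3: pushing the iterated $\Omega_{\geq}$-elimination (equivalently, inverting $A$ and summing columns) far enough to see that the denominators are the \emph{partial sums} $f_{k-1}$ of generalized Fibonacci numbers, not the numbers themselves; and then handling the $k-3$ ``boundary'' variables coming from the initial facets $x_2\ge x_1,\dots,x_{k-1}\ge x_{k-2}$, which are exactly what force the auxiliary quantities $g_{k-1}$ and the nested sums $h_{k-1}$ into the answer. Keeping every index shift — and the domains of $f_{k-1},g_{k-1},h_{k-1}$ — consistent is the fiddly part; a clean bookkeeping device is to track the ray generators directly, noting that $r^{(j)}_l=F^{(k-1)}_{\,l-j+k-2}$ for $j\ge k-2$ while the rays $r^{(1)},\dots,r^{(k-3)}$ are genuinely different and account for the $h_{k-1}$-factors.
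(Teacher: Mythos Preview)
Your proof is correct and follows the paper's overall strategy—order the pieces, recognize the ``no $k$-gon'' event as the window inequalities, pass to a generating function over the resulting cone, and read off the product of exponents—but your execution differs in two places worth noting. First, where the paper works in a genuinely discrete model (integer stick of length $N$, count compositions in $S_k(N)$, invoke the Flajolet--Sedgewick asymptotic for restricted partitions, and let $N\to\infty$), you go straight to the continuous answer via $\lim_{q\to1^-}(1-q)^n\Phi_C(q)$; this is equivalent but bypasses the asymptotic lemma. Second, and more substantively, the paper obtains the exponents $f_{k-1}(\cdot)$ and $h_{k-1}(\cdot)$ by iteratively applying the $\Omega_\ge$ reduction identities (their Propositions~1 and~2), tracking the exponents through Lemma~3, whereas you observe that the cone is unimodular (since $A$ is unit lower-triangular) and compute the ray sums all at once by solving $A^{\!\top}d=\mathbf 1$. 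Your recursion $d_i=1+d_{i+1}+\cdots+d_{i+k-1}$ for $i\ge k-2$ is exactly the paper's Lemma~3, and your truncated recursion for $i\le k-3$ together with the telescoping $d_i-d_{i+1}=g_{k-1}(i+1)$ recovers precisely the $h_{k-1}$ terms the paper gets from its Proposition~2. The linear-algebraic shortcut buys you a cleaner bookkeeping of the boundary variables; the paper's explicit $\Omega$ computation, on the other hand, makes the generating-function identity (Lemma~1) visible as a standalone partition result.
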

Unfortunately, there is not a (nice enough) closed form for partial sums of generalized Fibonacci numbers to express the previous expression in a simpler form, but one is well known for typical Fibonacci numbers $F_n^{(2)}$:
$$
f_{2}(i)=\sum_{n=1}^i F_n^{(2)} = F_{i+2}^{(2)}-1
$$
for $i\geq 2$, and so the expression obtained by the Illinois Geometry lab \eqref{Illinois} follows.
Also note that while the probability might seem hard to calculate, it is simple from a computational point of view, since generalized Fibonacci numbers are easily calculated and the sums $f_{k}(i)$ are documented on the OEIS.

\begin{example*}
If we want to make a $4$-gon, we have
$$(f_3(i))_{i\geq 2}=(1,2,4,8,15,28,52,96,177,\ldots) \qquad (\text{OEIS } A008937 \text{ \cite{oeis}}),$$ and so, if $P(4,n)$ stands for the probability that no choices of $4$ segments from a stick broken up into $n$ parts will form a $4$-gon, then
 $$P(4,n)=\frac{n!}{1\cdot2\cdot 4\cdots f_{3}(n)(1+f_{3}(n-2)+f_{3}(n))}.$$ 
Thus, we can easily calculate
$$P(4,4)=\frac{4!}{1\cdot 2\cdot 4\cdot (1+1+4)}=\frac{1}{2}, \;
P(4,5)=\frac{15}{88}, \;
P(4,6)=\frac{3}{80},$$
and so on.
\end{example*}

Our main tool to prove the previous theorem will be the following lemma on partitions subject to a system of Diophantine inequalities, which we will establish using Partition Analysis.

\begin{lemma} \label{lem}
The number of partitions of $N$ into $n$ positive parts $a_i$ and under the additional linear constraints 
\begin{align*}
    a_1&\geq a_2+a_3+\cdots+a_k, \\
    a_2&\geq a_3+a_4+\cdots+a_{k+1}, \\
    \vdots& \\
    a_{n-k+1}&\geq a_{n-k+2}+a_{n-k+3}+\cdots+a_n \\
    a_{n-k+2}&\geq a_{n-k+3}\\
    a_{n-k+3}&\geq a_{n-k+4}\\
    \vdots&\\
    a_{n-1}&\geq a_n
\end{align*}
 equals the number of partitions of $N$ into parts taken from the set of integers $$\{f_{k-1}\left(k-2\right),f_{k-1}\left(k-1\right),\ldots,f_{k-1}\left(n\right),h_{k-1}(2),h_{k-1}(3),\ldots,h_{k-1}(k-2)\}.$$
\end{lemma}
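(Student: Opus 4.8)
The plan is to compute the generating function $\sum_{N\ge 0}p(N)\,q^N$, where $p(N)$ denotes the number of partitions described in the statement, with MacMahon's $\Omega_{\geq}$ operator, and then to read off an infinite product. First note that the listed constraints force $a_1\ge a_2\ge\cdots\ge a_n$: each long inequality $a_i\ge a_{i+1}+\cdots+a_{i+k-1}$ already yields $a_i\ge a_{i+1}$ because the other parts are positive, and the explicit chain $a_{n-k+2}\ge\cdots\ge a_n$ covers the tail; moreover every condition $a_i\ge 1$ then follows from $a_n\ge 1$. Marking $a_i$ by a formal variable $x_i$ and attaching a free variable $\lambda_j$ to record the slack of the $j$th inequality, one obtains, after specializing all $x_i=q$,
$$
\sum_{N\ge 0}p(N)\,q^N=\Omega_{\geq}\,\prod_{i=1}^{n}\frac{x_i\mu_i}{1-x_i\mu_i},\qquad \mu_i=\prod_{j=1}^{n-1}\lambda_j^{\,c_{ij}},
$$
where $c_{ij}=+1$ when $a_i$ is the larger side of the $j$th inequality, $c_{ij}=-1$ when $a_i$ appears on the smaller side, and $c_{ij}=0$ otherwise.

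Next I would eliminate $\lambda_{n-1},\lambda_{n-2},\dots,\lambda_1$ in this order. A direct inspection of the exponents shows that, at each stage, the variable being removed has a positive power in exactly one denominator factor and non-positive powers in all the others, so the standard evaluations
$$
\Omega_{\geq}\frac{1}{(1-\lambda A)(1-B/\lambda)}=\frac{1}{(1-A)(1-AB)},\qquad \Omega_{\geq}\frac{1}{(1-\lambda A)\prod_s(1-B_s/\lambda^{c_s})}=\frac{1}{(1-A)\prod_s(1-A^{c_s}B_s)}
$$
apply and keep the number of denominator factors fixed. After the $n-1$ eliminations one is therefore left with exactly $n$ factors $(1-q^{e_1})^{-1},\dots,(1-q^{e_n})^{-1}$, together with a numerator that, once carried along, contributes only an overall power of $q$ — the total of the minimal admissible partition — which I suppress. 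Since the inequalities can be solved for the slacks $b_i$ one after another, this is just the Partition Analysis incarnation of the substitution $a_i=\sum_\ell\phi_\ell(i)\,b_\ell$ with non-negative integers $\phi_\ell(i)$; it gives $N=\sum_{\ell=1}^{n}w_\ell b_\ell$ with $w_\ell=\sum_{i=1}^{n}\phi_\ell(i)$ and the $b_\ell$ free over the non-negative integers, so $e_\ell=w_\ell$ and $p(N)$ is, up to the said shift, the number of partitions of $N$ into the parts $w_1,\dots,w_n$. It remains to show that $\{w_1,\dots,w_n\}$ equals $\{f_{k-1}(k-2),\dots,f_{k-1}(n),h_{k-1}(2),\dots,h_{k-1}(k-2)\}$, which we denote $S$.

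This identification, which is where the generalized Fibonacci numbers appear, is the step I expect to be the main obstacle. Reading the expansions off the resolved system, for each $\ell$ one has $\phi_\ell(\ell)=1$, $\phi_\ell(i)=0$ for $i>\ell$, and for $i<\ell$ the recurrence $\phi_\ell(i)=\phi_\ell(i+1)$ in the tail range, respectively $\phi_\ell(i)=\phi_\ell(i+1)+\cdots+\phi_\ell(i+k-1)$ in the long range — the latter being precisely the $(k-1)$-generalized Fibonacci recurrence. When $\ell$ is a head variable or one of the first two tail variables, the seed $\phi_\ell(\ell)=1$ with zeros above reproduces $\phi_\ell(\ell-t)=F_{k-2+t}^{(k-1)}$, and summing over $i$ telescopes into $w_\ell=f_{k-1}(\ell+k-3)$; as $\ell$ runs over these indices this produces $f_{k-1}(k-2),\dots,f_{k-1}(n)$. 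When $\ell$ is a deeper tail variable, $\phi_\ell$ stays equal to $1$ along the whole tail and then enters the long recurrence from a boosted value at index $n-k+1$; accumulating the resulting corrections down the chain is exactly what builds the nested partial sums defining $g_{k-1}$ and then $h_{k-1}$, and a careful induction on the number of tail indices below $\ell$ should give $w_\ell=h_{k-1}(\,\cdot\,)$ there, yielding $h_{k-1}(2),\dots,h_{k-1}(k-2)$ (when $k=3$ there are no such $\ell$, which is why the $h_{k-1}$ drop out). Granting this, $\sum_N p(N)\,q^N=\prod_{e\in S}(1-q^e)^{-1}$ up to the suppressed power of $q$, i.e.\ the generating function of partitions into parts from $S$, and the lemma follows. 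Matching the layered construction $f_{k-1}\rightarrow g_{k-1}\rightarrow h_{k-1}$ against the cascade of $\Omega_{\geq}$-eliminations is the delicate point; I would set it up as a single induction in which each surviving factor carries its $q$-exponent recorded explicitly as a partial sum of $(k-1)$-generalized Fibonacci numbers.
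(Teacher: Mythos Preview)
Your approach is essentially the paper's: compute the constrained generating function by MacMahon's $\Omega_{\geq}$ and identify the resulting $n$ exponents with the set $S$. The paper packages this as two explicit propositions: one eliminates the ``long'' variables $\lambda_1,\dots,\lambda_{n-k+1}$ (in \emph{forward} order) and one extra $\mu$, arriving at exponents $f_{k-1}(k-2),\dots,f_{k-1}(n)$ and residual factors carrying $g_{k-1}(\cdot)$; the second eliminates the remaining $\mu$'s and shows the exponents accumulate to $h_{k-1}(2),\dots,h_{k-1}(k-2)$. Your slack-variable reformulation $a_i=\sum_\ell\phi_\ell(i)b_\ell$ is an equivalent (and arguably cleaner) way to read off the same weights, and your computation $w_\ell=f_{k-1}(\ell+k-3)$ for $\ell\le n-k+3$ is correct and matches the paper's first proposition.

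The only real gap is the one you flag yourself: for the deeper tail indices $\ell\ge n-k+4$ you assert that ``a careful induction \dots\ should give $w_\ell=h_{k-1}(\cdot)$'' but do not carry it out. This is precisely the content of the paper's Proposition~2, and it is not automatic: once the seed for $\phi_\ell$ consists of several consecutive $1$'s before the $(k-1)$-step recurrence begins, the partial sums no longer collapse to a single $f_{k-1}$ value, and one has to check that the nested partial-sum structure $f_{k-1}\to g_{k-1}\to h_{k-1}$ really is what emerges. Concretely, you would need to show that $w_{n-k+2+m}=f_{k-1}(n)+\sum_{j=2}^{m} g_{k-1}(k+1-j)$ for $2\le m\le k-2$; the paper does this by tracking each successive $\Omega_{\geq}$-step on the $\mu$'s. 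A minor side remark: carrying the numerator $\prod_i x_i\mu_i$ through the eliminations is not quite ``only an overall power of $q$'' at intermediate stages (it contains negative powers of the $\lambda_j$), so it is cleaner, as the paper does, to sum over $a_i\ge 0$ and recover positivity at the end; alternatively your slack-variable argument bypasses this entirely.
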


We end this section with a brief overview of the mathematics behind MacMahon's operator that we will use in Partition Analysis.
The linear operator $\Omega_{\geq}$ is defined as acting on multiple Laurent series as follows:
    $$
    \underset{\geq}{\Omega}\sum_{n_1,\ldots,n_r=-\infty}^{\infty}A_{n_1,\ldots,n_r}\lambda_1^{n_1}\cdots\lambda_r^{n_r} = \sum_{n_1,\ldots,n_r=0}^{\infty}A_{n_1,\ldots,n_r}.
    $$   
    The $A_{n_1,\ldots,n_r}$ can be thought of as rational functions of several variables independent of the $\lambda_i$ and the series are to be treated analytically (because the method relies on unique Laurent series expansion of rational functions).
    Also note that we could specify a domain in each case to guarantee absolute convergence of the sums in an open neighborhood of the circles $|\lambda_i|=1$, so that the $\Omega_{\geq}$ operator is well-defined.
    
While most of the time $\Omega_{\geq}$ acts on $\lambda_i$, we will also use $\mu_i$ to indicate that these variables encode a different kind of linear constraint than the $\lambda_i$.

\section{Discrete setting and sketch of argument}
Henceforth, we consider a stick of integer length $N$ broken at integer increments into $n$ pieces $a_1,\ldots,a_n$.
We may consider broken stick problems on forming polygons in this discrete setting. We call ``Set of all outcomes'' the set of all possible $n$-tuples of pieces that can form a stick of length $N$, namely the set of compositions of $N$ into $n$ parts $$C(N,n):=    \left\{\left(a_1,\ldots,a_n\right)\in \mathbb{Z}^n:a_i\geq 1, \sum a_i=N\right\}.
$$
To every such problem, we can associate a ``Set of positive outcomes'' which represents the subsets of $C(N,n)$ that satisfy given Diophantine inequalities. Then, the probabilities we seek will take the form \begin{equation} \label{prob} 
\frac{\#\text{Set of positive outcomes}}{\#\text{Set of all outcomes}}=\frac{ \#\text{Set of positive outcomes}}{\binom{N-1}{n-1}}.
\end{equation}
Below, we shall see concrete examples of ``Set of positive outcomes'' associated to certain problems. 
To go back to the continuous setting, we will look at the behavior of \eqref{prob} as $N\to\infty$, hence we only need asymptotics on $\#\text{Set of positive outcomes}$, which we obtain via a generating function for this set.

It should be noted that very recently, the authors in \cite{petersen_broken_2020} have considered a discrete variant of the \textit{Broken Spaghetti Noodle} problem to give a purely combinatorial proof. However, the proof we give below as an example of this discrete setting being applied is very different in nature.

\subsection{Example} We remind the reader that the probability that an $n$-gon can be formed from a stick broken into $n$ pieces is
\begin{align} \label{spaghetti}
    1-\frac{n}{2^{n-1}}.
\end{align}
To form an $n$-gon, it suffices to check that $a_i\leq n-a_i$ for $i=1,\ldots,n$ by the generalized triangle inequality, so the ``Set of positive outcomes'' is 
$$
    \left\{(a_1,\ldots,a_n)\in C(N,n):a_i\leq n-a_i \text{ for } i=1,\ldots,n\right\}.
$$
While one could obtain the generating function for the size of this set with Partition Analysis, we simply mention for the sake of this example that it has been studied by Andrews, Paule and Riese in \cite{andrews2001macmahon} as a generalization of Hermite's problem. 
They showed that this generating function is 
    $$
    \frac{q^{n}}{(1-q)^{n}}-n\frac{q^{2n-1}}{(1-q)^n(1+q)^{n-1}}.
    $$

The first term is the generating function for compositions, so comparing with \eqref{spaghetti}, we only have to show that the coefficient of $q^n$ in $$\frac{q^{2n-1}}{(1-q)^n(1+q)^{n-1}}\binom{N-1}{n-1}^{-1}$$ is $2^{1-n}$ as $N\to\infty$. This is easily seen to hold after doing partial fraction decomposition, clearing denominators, using binomial expansions and setting $q=1$.

We now give another example of the discrete setting being applied, but this time we explicitly use Partition Analysis.

\subsection{Example} \label{Example k=4}
We will treat the case $k=4$ of Theorem \ref{Main}.
It can be hard to verify that no choices of $4$ pieces can form a $4$-gon, since we need to check that they fail the triangle inequality every time. But if the segments of the broken stick are ordered, then it suffices to verify that, for consecutive groups of $4$ segments,
the biggest piece is smaller than or equal to the sum of the $3$ other parts of the group (which also excludes degenerate triangles). Thus, for simplicity, we assume that $a_1\geq a_2\geq\ldots\geq a_n\geq 1$. 
      \begin{center}
\begin{tikzpicture} [ultra thick]
  \draw (0,0)--(5,0) ;
  \filldraw (1.2,0) circle (2pt) ;
  \draw (0.6,-0.3) node {$a_1$} ;
  \draw (1.8,-0.3) node {$a_{2}$} ;
    \draw (4.9,-0.3) node {$a_n$} ; 
    \filldraw (4.7,0) circle (2pt) ;
      \filldraw (2.3,0) circle (2pt) ;
      \draw (3.3,-0.3) node {$\cdots$} ;
  \draw (3.5,0.3) node {} ;
  \draw (5,0.3) node {$N$} ;
    \draw (0,0.3) node {$0$} ;
\end{tikzpicture}
\end{center}

The preceding discussion tells us that the ordered ``Set of positive outcomes'' is
$$
    S_4(N):=\left\{(a_1,\ldots,a_n)\in C(N,n):a_1\geq\ldots\geq a_n, a_{i}\geq \sum_{j=1}^3a_{i+j} \text{ for } i=1,\ldots,n-3\right\}.
$$
To deal with this assumption on the ordering, one needs to observe that
$$
\#\text{Set of positive outcomes}=4!\#S_4(N).
$$
We consider the associated generating function
$$
G_4\left(q\right):=\sum_{N\geq 4}\#S_4(N)q^N
=\sum_{\substack{a_1\geq a_2\geq \ldots \geq a_n\geq 1 \\ a_1\geq a_2+a_3+a_4 \\ \vdots \\ a_{n-3}\geq a_{n-2}+a_{n-1}+a_n}}q^{a_1+\cdots+a_n}
$$
and we use Partition Analysis to find a closed form representation. 

MacMahon derived many identities for his Partition Analysis. The ones we will need for this example are
\begin{equation} \label{Identity_2}
 \underset{\geq}{\Omega} \frac{1}{\left(1-\lambda x\right)\left(1-\frac{y}{\lambda}\right)}=\frac{1}{\left(1-x\right)\left(1-xy\right)}
\end{equation}
and
\begin{equation} \label{Identity_4}
 \underset{\geq}{\Omega} \frac{1}{\left(1-\lambda x\right)\left(1-\frac{y}{\lambda}\right)\left(1-\frac{z}{\lambda}\right)\left(1-\frac{w}{\lambda}\right)}=\frac{1}{\left(1-x\right)\left(1-xy\right)\left(1-xz\right)\left(1-xw\right)}.
\end{equation}
These identities follow from geometric series summation, and it is easy to see how to generalize them. One can also check \cite{andrews2001macmahon} for many more such identities.

We encode the Diophantine inequalities of $G_4\left(q\right)$ in new parameters to use MacMahon's operator:
\begin{align*}
G_4\left(q\right) 
=\underset{\geq}{\Omega}\sum_{a_i\geq 0}q^{a_1+\cdots+a_n}\lambda_1^{a_1-a_2-a_3-a_4}\lambda_2^{a_2-a_3-a_4-a_5}\cdots\lambda_{n-3}^{a_{n-3}-a_{n-2}-a_{n-1}-a_{n}}\mu_{n-2}^{a_{n-2}-a_{n-1}}\mu_{n-1}^{a_{n-1}-a_{n}}.
\end{align*}
Notice that if $a_{i}\geq \sum_{j=1}^3a_{i+j}$ for $i=1,\ldots,n-3$, then $a_1\geq a_2\geq\ldots\geq a_{n-2}$ is satisfied, which explains the presence of the variables $\mu_{n-2}$ and $\mu_{n-1}$ to encode the remaining inequalities $a_{n-2}\geq a_{n-1}\geq a_n$. 

Using geometric series summation, the generating function is equal to
\begin{align*}
\underset{\geq}{\Omega}&\frac{1}{
\left(1-q\lambda_1\right)\left(1-q\frac{\lambda_2}{\lambda_1}\right)\left(1-q\frac{\lambda_3}{\lambda_2\lambda_1}\right)\left(1-q\frac{\lambda_4}{\lambda_3\lambda_2\lambda_1}\right)\left(1-q\frac{\lambda_5}{\lambda_4\lambda_3\lambda_2}\right)\cdots\left(1-q\frac{\lambda_{n-3}}{\lambda_{n-4}\lambda_{n-5}\lambda_{n-6}}\right)} \\
\times &\frac{1}{\left(1-q\frac{\mu_{n-2}}{\lambda_{n-3}\lambda_{n-4}\lambda_{n-5}}\right)\left(1-q\frac{\mu_{n-1}}{\mu_{n-2}\lambda_{n-3}\lambda_{n-4}}\right)\left(1-q\frac{1}{\mu_{n-1}\lambda_{n-3}}\right)}.
\end{align*}
Next, applying \eqref{Identity_4} iteratively to get rid of $\lambda_1,\lambda_2,\ldots,\lambda_{n-3}$, we get
\begin{align*}
&\frac{1}{\left(1-q^1\right)}\underset{\geq}{\Omega}\frac{1}{
\left(1-q^{1+1}\lambda_2\right)\left(1-q^{1+1}\frac{\lambda_3}{\lambda_2}\right)\left(1-q^{1+1}\frac{\lambda_4}{\lambda_3\lambda_2}\right)\left(1-q\frac{\lambda_5}{\lambda_4\lambda_3\lambda_2}\right)\cdots\left(1-q\frac{\lambda_{n-3}}{\lambda_{n-4}\lambda_{n-5}\lambda_{n-6}}\right) }\\
\times &\frac{1}{(1-q\frac{\mu_{n-2}}{\lambda_{n-3}\lambda_{n-4}\lambda_{n-5}})(1-q\frac{\mu_{n-1}}{\mu_{n-2}\lambda_{n-3}\lambda_{n-4}})(1-q\frac{1}{\mu_{n-1}\lambda_{n-3}})}\\
=&\frac{1}{\left(1-q^1\right)\left(1-q^{1+1}\right)}\\
\times &\underset{\geq}{\Omega}\frac{1}{
\left(1-q^{1+1+2}\lambda_3\right)\left(1-q^{1+1+2}\frac{\lambda_4}{\lambda_3}\right)\left(1-q^{1+2}\frac{\lambda_5}{\lambda_4\lambda_3}\right)\left(1-q\frac{\lambda_6}{\lambda_5\lambda_4\lambda_3}\right)\cdots\left(1-q\frac{\lambda_{n-3}}{\lambda_{n-4}\lambda_{n-5}\lambda_{n-6}}\right)}\\
\times &\frac{1}{(1-q\frac{\mu_{n-2}}{\lambda_{n-3}\lambda_{n-4}\lambda_{n-5}})(1-q\frac{\mu_{n-1}}{\mu_{n-2}\lambda_{n-3}\lambda_{n-4}})(1-q\frac{1}{\mu_{n-1}\lambda_{n-3}})}
\\
=&\frac{1}{\left(1-q^1\right)\left(1-q^{1+1}\right)\left(1-q^{1+1+2}\right)}\\
\times &\underset{\geq}{\Omega}\frac{1}{
\left(1-q^{1+1+2+4}\lambda_4\right)\left(1-q^{1+2+4}\frac{\lambda_5}{\lambda_4}\right)\left(1-q^{1+4}\frac{\lambda_6}{\lambda_5\lambda_4}\right)\left(1-q\frac{\lambda_7}{\lambda_6\lambda_5\lambda_4}\right)\cdots\left(1-q\frac{\lambda_{n-3}}{\lambda_{n-4}\lambda_{n-5}\lambda_{n-6}}\right)}\\
\times &\frac{1}{(1-q\frac{\mu_{n-2}}{\lambda_{n-3}\lambda_{n-4}\lambda_{n-5}})(1-q\frac{\mu_{n-1}}{\mu_{n-2}\lambda_{n-3}\lambda_{n-4}})(1-q\frac{1}{\mu_{n-1}\lambda_{n-3}})},
\end{align*}
and so on.
We have left the exponents under this form to show the connection with Generalized Fibonacci numbers. Hence, after $n-3$ applications of \eqref{Identity_4}, we obtain
\begin{align*}
     G_4(q)=&\frac{1}{
 \left(1-q^{f_3(2)}\right)\left(1-q^{f_3(3)}\right)\left(1-q^{f_3(4)}\right)\left(1-q^{f_3(5)}\right)\cdots\left(1-q^{f_3(n-2)}\right)}\\
 \times &\underset{\geq}{\Omega}\frac{1}{\left(1-q^{f_{3}(n-1)}\mu_{n-2}\right)\left(1-q^{1+f_{3}(n-3)+f_3(n-2)}\frac{\mu_{n-1}}{\mu_{n-2}}\right)\left(1-q^{1+f_3(n-2)}\frac{1}{\mu_{n-1}}\right)},
\end{align*}
where we have implicitly used the fact that $$f_3(j)=1+f_3(j-1)+f_3(j-2)+f_3(j-3)$$ for all $j\geq 3$ (see Lemma \ref{lem 3}), and that $(f_3(i))_{i\geq 2}=(1,2,4,8,15,28,\ldots)$.

Finally, we only need to apply \eqref{Identity_2} twice 
to get the expression
 $$
 G_4(q)=\frac{1}{
  \left(1-q^{f_3(2)}\right)\left(1-q^{f_3(3)}\right)\left(1-q^{f_3(4)}\right)\cdots\left(1-q^{f_3(n)}\right)\left(1-q^{1+f_{3}(n-2)+f_3(n)}\right)}.
 $$

Now that we have obtained this closed form representation for $G_4(N)$, which showcased how Partition Analysis might be applied to solve broken stick problems, we move on to the more general case. This will also show how to finish the proof of Theorem \ref{Main} after obtaining a suitable representation for the generating function of the size of the ``Set of positive outcomes''.

\section{Proof of Theorem \ref{Main}}
Most of the results we present in this section can be proved using double induction on $n\geq k\geq 3$ if desired, but we find it to be unenlightening and unnecessarily messy. We prefer to break down the steps in a natural way and directly apply identities akin to \eqref{Identity_4}.

\subsection{Proof of Lemma \ref{lem}}
Let $p_k(N,n)$ denote the number of partitions of $N$ into $n$ positive parts that respect the system of inequalities in Lemma \ref{lem}. It is easy to see that the following generating function keeps track of $p_k(N,n)$:
$$
\sum_{N\geq k}p_k(N,n)q^N
=\sum_{\substack{a_1\geq a_2\geq \ldots \geq a_n\geq 1 \\ a_1\geq a_2+\cdots+a_k \\ \vdots \\ a_{n-k+1}\geq a_{n-k+2}+\cdots+a_n}}q^{a_1+\cdots+a_n},
$$
since the inequalities $a_1\geq a_2\geq \ldots \geq a_{n-k+2}$ are vacuously satisfied.
Following the previous example, we add new parameters to encode the system of inequalities and use Partition Analysis. We get
\begin{align*} \begin{split}
    &\underset{\geq}{\Omega}\sum q^{a_1+\cdots+a_n}\lambda_1^{a_1-a_2-\cdots-a_k}\lambda_2^{a_2-a_3-\cdots-a_{k+1}}\cdots \lambda_{n-k+1}^{a_{n-k+1}-a_{n-k+2}-\cdots-a_{n}} \\
    &\times \mu_{n-k+2}^{a_{n-k+2}-a_{n-k+3}}\mu_{n-k+3}^{a_{n-k+3}-a_{n-k+4}}\cdots\mu_{n-1}^{a_{n-1}-a_{n}} 
\end{split} \\ \begin{split}
&= \underset{\geq}{\Omega}\frac{1}{
\left(1-q\lambda_1\right)\left(1-q\frac{\lambda_2}{\lambda_1}\right)\cdots\left(1-q\frac{\lambda_k}{\lambda_{k-1}\cdots\lambda_1}\right)\left(1-q\frac{\lambda_{k+1}}{\lambda_k\cdots\lambda_2}\right)\cdots\left(1-q\frac{\lambda_{n-k+1}}{\lambda_{n-k}\cdots\lambda_{n-2k+2}}\right) } \\
&\times \frac{1}{\left(1-q\frac{\mu_{n-k+2}}{\lambda_{n-k+1}\cdots\lambda_{n-2k+3}}\right) \left(1-q\frac{\mu_{n-k+3}}{\mu_{n-k+2}\lambda_{n-k+1}\cdots\lambda_{n-2k+4}}\right) 
\cdots \left(1-q\frac{\mu_{n-1}}{\mu_{n-2}\lambda_{n-k+1}\lambda_{n-k}}\right)\left(1-q\frac{1}{\mu_{n-1}\lambda_{n-k+1}}\right)
}. \end{split}
\end{align*}

We would like to apply a reduction identity like \eqref{Identity_4} to get rid of $\lambda_1,\ldots,\lambda_{n-k+1}$. We use a generalization of that identity which easily follows from induction and geometric series summation. 
\begin{lem} \label{lem 2}
For every $k\geq 2$,
\begin{equation} \label{Identity_k}
 \underset{\geq}{\Omega} \frac{1}{\left(1-\lambda x_1\right)\left(1-\frac{x_2}{\lambda}\right)\cdots\left(1-\frac{x_k}{\lambda}\right)}=\frac{1}{\left(1-x_1\right)\left(1-x_1x_2\right)\cdots\left(1-x_1\cdots x_k\right)}.
\end{equation}
\end{lem}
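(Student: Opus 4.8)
The plan is to prove Lemma \ref{lem 2} by a straightforward induction on $k\geq 2$, mirroring the mechanism already visible in the worked $k=4$ example, and then to observe that the identity \eqref{Identity_k} is exactly the tool needed to clear the $\lambda_i$ in the general generating function. The base case $k=2$ is MacMahon's identity \eqref{Identity_2}, which I would recall is obtained by expanding both factors as geometric series, writing
$$
\underset{\geq}{\Omega}\frac{1}{(1-\lambda x_1)(1-x_2/\lambda)}=\underset{\geq}{\Omega}\sum_{i,j\geq 0}x_1^i x_2^j \lambda^{i-j}=\sum_{i\geq j\geq 0}x_1^i x_2^j,
$$
and then reindexing $i=j+m$ to get $\sum_{j,m\geq 0}x_1^{j+m}x_2^{j}=\frac{1}{(1-x_1)(1-x_1x_2)}$. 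This is the model for every later step.

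For the inductive step, assume \eqref{Identity_k} holds for $k-1$ and consider the left-hand side with $k$ factors in the denominator involving $\lambda$. First I would isolate the last factor: expand $\frac{1}{1-x_k/\lambda}=\sum_{m\geq 0}(x_k/\lambda)^m$, so that the $\lambda$-exponent bookkeeping reduces to the $(k-1)$-variable case with the shift $\lambda\mapsto\lambda$ and an extra power $\lambda^{-m}$ floating around; more cleanly, one can merge $x_{k-1}$ and $x_k$ by noting that $\frac{1}{(1-x_{k-1}/\lambda)(1-x_k/\lambda)}$ contributes monomials $\lambda^{-(r+s)}x_{k-1}^r x_k^s$, apply the induction hypothesis to the remaining $k-1$ factors (treating the combined negative power as a single variable's worth of exponent), and then re-expand. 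Either way the combinatorial content is the same: $\underset{\geq}{\Omega}$ retains precisely the terms where the exponent of $x_1$ is at least the total exponent contributed by $x_2,\ldots,x_k$, i.e. $x_1^{i}\prod_{j\geq 2}x_j^{i_j}$ with $i\geq i_2+\cdots+i_k$, and parametrising the slack as in the base case yields $\frac{1}{(1-x_1)(1-x_1x_2)\cdots(1-x_1\cdots x_k)}$ — one new factor $1-x_1\cdots x_k$ appearing for each added variable. I would present this as a clean one-line geometric-series computation rather than a formal induction, since the pattern is transparent.

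With Lemma \ref{lem 2} in hand, the proof of Lemma \ref{lem} proceeds by applying \eqref{Identity_k} repeatedly to the displayed $\Omega_{\geq}$-expression, eliminating $\lambda_1$, then $\lambda_2$, and so on through $\lambda_{n-k+1}$, exactly as in the $k=4$ example but with $k$ factors per elimination instead of $4$. Each elimination of $\lambda_j$ produces a product of factors $1-q^{e}$ whose exponents $e$ are partial sums built from the previously generated exponents plus $1$, and the defining recursion $F^{(k-1)}_n=\sum_{i=1}^{k-1}F^{(k-1)}_{n-i}$ together with the telescoping identity for $f_{k-1}$ (the analogue of $f_3(j)=1+f_3(j-1)+f_3(j-2)+f_3(j-3)$, to be recorded as Lemma \ref{lem 3}) shows that these exponents are exactly $f_{k-1}(k-2),f_{k-1}(k-1),\ldots,f_{k-1}(n)$. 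After the $\lambda$'s are gone, $k-2$ factors involving $\mu_{n-k+2},\ldots,\mu_{n-1}$ remain, and applying \eqref{Identity_2} (the $k=2$ case) successively $k-3$ times clears the $\mu$'s and produces the remaining exponents $h_{k-1}(2),\ldots,h_{k-1}(k-2)$, by the definition of $g_{k-1}$ and $h_{k-1}$. The resulting generating function $\prod 1/(1-q^{f_{k-1}(\cdot)})\cdot\prod 1/(1-q^{h_{k-1}(\cdot)})$ is manifestly the generating function for partitions into parts from the stated set, which proves the lemma.

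The main obstacle I anticipate is purely bookkeeping: tracking how the $q$-exponents evolve under successive applications of \eqref{Identity_k} and verifying that they coincide with the partial-sum sequences $f_{k-1}(i)$ and the nested sums $g_{k-1},h_{k-1}$ — in particular getting the index ranges and the "$+1$" offsets exactly right, and confirming that the recursion $f_{k-1}(j)=1+\sum_{i=1}^{k-1}f_{k-1}(j-i)$ (Lemma \ref{lem 3}) holds for all relevant $j$ so that the exponents close up correctly. The identities themselves are elementary; the care is entirely in the indices, which is why the $k=4$ case was worked out explicitly first as a template. Once Lemma \ref{lem} is established, Theorem \ref{Main} follows by specialising the earlier discrete-setting discussion: the ordered "Set of positive outcomes" $S_k(N)$ is enumerated (after multiplying by $k!$ to account for orderings) by the partition count of Lemma \ref{lem} with $N=n$ in the limiting sense, and dividing by $\binom{N-1}{n-1}$ and taking $N\to\infty$ via partial fractions gives the stated probability $n!$ over the product of the $f_{k-1}$ and $h_{k-1}$ values.
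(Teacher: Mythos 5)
Your proposal is correct and is essentially the paper's own justification: the paper establishes \eqref{Identity_k} only by asserting that it ``easily follows from induction and geometric series summation,'' and your expansion of all factors as geometric series, retention of the terms with $i\geq i_2+\cdots+i_k$ under $\Omega_{\geq}$, and slack substitution $i=i_2+\cdots+i_k+m$ is precisely that computation made explicit. The only point to fix is notational: the substitution gives $\sum_{m,i_2,\ldots,i_k\geq 0}x_1^m(x_1x_2)^{i_2}\cdots(x_1x_k)^{i_k}=\bigl((1-x_1)(1-x_1x_2)(1-x_1x_3)\cdots(1-x_1x_k)\bigr)^{-1}$, so the right-hand side of \eqref{Identity_k} must be read as in \eqref{Identity_4}, with each factor pairing $x_1$ with a single $x_j$, and your phrase ``one new factor $1-x_1\cdots x_k$ for each added variable'' should say $1-x_1x_k$ --- the cumulative product $1-x_1x_2\cdots x_k$ would make the identity false (already for $k=3$ the coefficient of $x_1x_2x_3$ would not match), and it is the non-cumulative form that your own slack parametrisation produces and that the later eliminations of the $\lambda_i$ and $\mu_i$ actually use.
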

We also have the following result on partial sums of the $f_{k-1}(i)$ which is to be used jointly with the previous lemma.
\begin{lem} \label{lem 3}
Let $k\geq 2$. Then for every $j\geq k-1$, we have
$$
f_k(j)=1+\sum_{i=1}^{k}f_k(j-i).
$$
\end{lem}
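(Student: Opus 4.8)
The claim in Lemma~\ref{lem 3} is the recurrence
$$
f_k(j)=1+\sum_{i=1}^{k}f_k(j-i)\qquad(j\geq k-1),
$$
and the plan is a direct computation from the definitions, with a small amount of care about the boundary terms where the $F_n^{(k)}$ vanish.

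First I would unwind the definition $f_k(j)=\sum_{n=k-1}^{j}F_n^{(k)}$ (with the convention $f_k(j)=0$ for $0\le j\le k-2$, which is consistent with an empty sum) and rewrite the right-hand side as
$$
1+\sum_{i=1}^{k}f_k(j-i)=1+\sum_{i=1}^{k}\ \sum_{n=k-1}^{j-i}F_n^{(k)}.
$$
The next step is to swap the order of summation: for a fixed $n$ with $k-1\le n\le j-1$, the term $F_n^{(k)}$ appears once for each $i\in\{1,\dots,k\}$ with $n\le j-i$, i.e.\ for $i\le j-n$, so its multiplicity is $\min(k,\,j-n)$. When $j-n\ge k$ this multiplicity is exactly $k$, and when $1\le j-n\le k-1$ it is $j-n$. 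Thus
$$
1+\sum_{i=1}^{k}f_k(j-i)=1+k\!\!\sum_{n=k-1}^{j-k}\!\!F_n^{(k)}+\sum_{m=1}^{k-1}m\,F_{j-m}^{(k)}.
$$

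Now I would compare this with $f_k(j)=\sum_{n=k-1}^{j}F_n^{(k)}$ by using the defining recurrence $F_j^{(k)}=\sum_{i=1}^{k}F_{j-i}^{(k)}$ repeatedly on the top few terms $F_{j}^{(k)},F_{j-1}^{(k)},\dots$ of $f_k(j)$: each application of the recurrence peels a top index down into a sum of $k$ lower ones, and after telescoping, the terms $F_n^{(k)}$ with $n\le j-k$ accumulate coefficient $k$ while the terms $F_{j-m}^{(k)}$ for $1\le m\le k-1$ pick up coefficient $m$, matching the display above up to the additive constant. The constant $1$ is then accounted for by the base value $F_{k-1}^{(k)}=1$ together with the vanishing $F_n^{(k)}=0$ for $0\le n\le k-2$, which is precisely what makes the boundary of the sum behave correctly when $j$ is close to $k-1$. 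Handling $j=k-1$ separately is the cleanest base check: there $f_k(k-1)=F_{k-1}^{(k)}=1$, while the right-hand side is $1+\sum_{i=1}^{k}f_k(k-1-i)=1+0$ since every argument $k-1-i\le k-2$.

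The only real subtlety — and the step I expect to need the most care — is bookkeeping the coefficients $\min(k,j-n)$ correctly near the top of the range, i.e.\ making sure the partial telescoping of the $F^{(k)}$-recurrence deposits coefficient exactly $m$ (not $m+1$ or $k$) on $F_{j-m}^{(k)}$ for $1\le m\le k-1$; once the indices are lined up this is a finite, mechanical check. An alternative, if one prefers to avoid the double-sum manipulation, is to induct on $j$: assuming the identity at $j-1$, one has $f_k(j)=f_k(j-1)+F_j^{(k)}$, and substituting the recurrence for $F_j^{(k)}$ and then for $f_k(j-1)$ reduces everything to the same coefficient count; I would present whichever version reads more transparently, but the summation-swap argument above is the one I would try first.
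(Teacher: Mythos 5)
Your proof is correct, but your primary route is genuinely different from the paper's. The paper disposes of this lemma by a one-line induction on $j$: the base case $j=k-1$ gives $1=1$, and the step combines $f_k(j+1)=f_k(j)+F_{j+1}^{(k)}$ with the recurrence $F_{j+1}^{(k)}=\sum_{i=1}^{k}F_{j+1-i}^{(k)}$ — this is exactly the alternative you sketch in your closing paragraph. Your main argument instead expands both sides and counts multiplicities: the right-hand side becomes $1+k\sum_{n=k-1}^{j-k}F_n^{(k)}+\sum_{m=1}^{k-1}mF_{j-m}^{(k)}$, which you then match against $f_k(j)$ by redistributing its top terms through the $F^{(k)}$-recurrence. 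That matching step is the only loose point as written ("peeling the top index repeatedly" is vaguer than it needs to be); it becomes a clean finite check if you apply the recurrence exactly once to each $F_n^{(k)}$ with $n\ge k$ inside $f_k(j)$ and recount multiplicities, and in fact you can skip the $\min(k,j-n)$ bookkeeping altogether by swapping on the left-hand side instead: $f_k(j)=1+\sum_{n=k}^{j}\sum_{i=1}^{k}F_{n-i}^{(k)}=1+\sum_{i=1}^{k}\sum_{n=k}^{j}F_{n-i}^{(k)}=1+\sum_{i=1}^{k}f_k(j-i)$, using $F_m^{(k)}=0$ for $m\le k-2$. In short, the paper's induction is shorter and hides all boundary issues in the identity $f_k(j+1)=f_k(j)+F_{j+1}^{(k)}$, while your direct count makes the coefficient structure explicit at the cost of the careful tally you yourself flag; both are valid.
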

This result is readily seen to follow from the definitions of $f_k(i)$ and $F_n^{(k)}$ after a careful rereading. It can also be proven by induction as follows, but it is still no more than a careful unpacking of definitions.
\begin{proof}
There is nothing to prove for $j=k-1$ since both sides of the expression reduce to $1$. On the other hand, since
$$
f_k(j+1)=f_k(j)+F_{j+1}^{(k)}
$$
and $$F_{j+1}^{(k)}=\sum_{i=1}^{k}F_{j+1-i}^{(k)},$$ we find that the result holds for $j+1$ if we assume it does for some $j\geq k-1$.
\end{proof}
The previous lemmas allow us to obtain the following proposition.
\begin{prop} \label{prop 1}
For every $n\geq k\geq 3$, we have
\begin{align*}
    \begin{split}
&\underset{\geq}{\Omega}\frac{1}{
\left(1-q\lambda_1\right)\left(1-q\frac{\lambda_2}{\lambda_1}\right)\cdots\left(1-q\frac{\lambda_k}{\lambda_{k-1}\cdots\lambda_1}\right)\left(1-q\frac{\lambda_{k+1}}{\lambda_k\cdots\lambda_2}\right)\cdots\left(1-q\frac{\lambda_{n-k+1}}{\lambda_{n-k}\cdots\lambda_{n-2k+2}}\right)} \\
&\times \frac{1}{\left(1-q\frac{\mu_{n-k+2}}{\lambda_{n-k+1}\cdots\lambda_{n-2k+3}}\right) \left(1-q\frac{\mu_{n-k+3}}{\mu_{n-k+2}\lambda_{n-k+1}\cdots\lambda_{n-2k+4}}\right) 
\cdots \left(1-q\frac{\mu_{n-1}}{\mu_{n-2}\lambda_{n-k+1}\lambda_{n-k}}\right)\left(1-q\frac{1}{\mu_{n-1}\lambda_{n-k+1}}\right)
} \end{split} \\
&=\frac{1}{\left(1-q^{f_{k-1}(k-2)}\right)\left(1-q^{f_{k-1}(k-1)}\right)\cdots\left(1-q^{f_{k-1}(n-1)}\right)} \underset{\geq}{\Omega}\frac{1}{\left(1-q^{f_{k-1}(n)}\mu_{n-k+3}\right)} \\
&\times \frac{1}{\left(1-q^{g_{k-1}(k-2)}\frac{\mu_{n-k+4}}{\mu_{n-k+3}}\right)
\left(1-q^{g_{k-1}(k-3)}\frac{\mu_{n-k+5}}{\mu_{n-k+4}}\right)
\cdots
\left(1-q^{g_{k-1}(3)}\frac{\mu_{n-1}}{\mu_{n-2}}\right)
\left(1-q^{g_{k-1}(2)}\frac{1}{\mu_{n-1}}\right)}.
\end{align*}
\end{prop}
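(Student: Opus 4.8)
The plan is to apply the reduction identity \eqref{Identity_k} of Lemma~\ref{lem 2} iteratively to eliminate the variables $\lambda_1,\lambda_2,\ldots,\lambda_{n-k+1}$ one at a time, exactly mirroring what was done for $k=4$ in Example~\ref{Example k=4}, and to track how the exponents of $q$ evolve using Lemma~\ref{lem 3}. At each stage the subexpression involving the ``next'' $\lambda$ and the $k-1$ factors in which that $\lambda$ appears in the denominator has precisely the shape $\frac{1}{(1-\lambda x_1)(1-x_2/\lambda)\cdots(1-x_k/\lambda)}$ required by \eqref{Identity_k}, so one application produces a new $q$-factor together with a new product whose exponents are obtained by the rule ``add the exponent that was attached to $\lambda$''. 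Concretely, if the $\lambda_i$-factor carries $q^{f_{k-1}(i+k-3)}$ (which is what the base case and the inductive step will show), then after clearing $\lambda_i$ the factor $1-q^{f_{k-1}(?)}\lambda_{i+1}$ acquires exponent $f_{k-1}(i+k-3)+(\text{old exponent of }\lambda_{i+1})$, and Lemma~\ref{lem 3}, namely $f_{k-1}(j)=1+\sum_{\ell=1}^{k-1}f_{k-1}(j-\ell)$, is exactly the identity that collapses the resulting sum of $k-1$ consecutive $f_{k-1}$-values plus the ambient $1$ into a single $f_{k-1}(j+1)$. This is the mechanism that makes the generalized Fibonacci numbers appear.

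First I would set up careful bookkeeping for the exponents. Writing the starting series with all exponents displayed (each original factor carries $q^1$), I would prove by induction on the number $m$ of eliminations performed that after $m$ steps the expression is $\prod_{i=0}^{m-1}(1-q^{f_{k-1}(k-2+i)})^{-1}$ times an $\Omega_\geq$ over a product in $\lambda_{m+1},\ldots,\lambda_{n-k+1},\mu_{n-k+2},\ldots,\mu_{n-1}$ in which the factor containing $\lambda_{m+1}$ in the numerator has exponent $f_{k-1}(k-2+m)$, the next $k-2$ factors (those with $\lambda_{m+1}$ alone in the denominator, or with a trailing $\lambda_{m+1}$ in a product of denominators) have exponents $1$ or partial sums that telescope correctly, and all remaining factors still carry $q^1$. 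The induction step is one application of \eqref{Identity_k} with $x_1 = q^{f_{k-1}(k-2+m)}\lambda_{m+1}/(\cdots)$ and the $x_2,\ldots,x_k$ being the $k-1$ factors in which $\lambda_{m+1}$ appears in a denominator; the products $x_1x_2,\ldots,x_1\cdots x_k$ on the right-hand side are precisely the new factors, and Lemma~\ref{lem 3} identifies $f_{k-1}(k-2+m) + f_{k-1}(k-3+m) + \cdots$ (exactly $k-1$ terms) $+\,1 = f_{k-1}(k-1+m)$, giving the next $\lambda$-factor its predicted exponent.

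Once all $n-k+1$ of the $\lambda_i$ are gone, I would read off the result: the prefactor is $\prod_{i=0}^{n-k}(1-q^{f_{k-1}(k-2+i)})^{-1} = \prod_{j=k-2}^{n-2}(1-q^{f_{k-1}(j)})^{-1}$, and there remains an $\Omega_\geq$ over a product in $\mu_{n-k+3},\ldots,\mu_{n-1}$ only. The very last $\lambda$-elimination also converts the factor $1-q\,\mu_{n-k+2}/(\lambda_{n-k+1}\cdots)$ into $1-q^{f_{k-1}(n-1)}\mu_{n-k+2}\cdot(\text{stuff})$ and more importantly, after one further step eliminating $\mu_{n-k+2}$ (still via \eqref{Identity_k}, since $\mu_{n-k+2}$ plays the role of $\lambda$ in $1-\lambda x_1$ for the one factor in which it appears in the numerator and $1-x/\lambda$ for those in which it appears in a denominator), produces the factor $1-q^{f_{k-1}(n)}\mu_{n-k+3}$ — this is where the exponent $f_{k-1}(n)$ and the quantities $g_{k-1}$ enter. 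Here I would invoke the definitions $g_{k-1}(j)=1+\sum_{\ell=2}^{j}f_{k-1}(n-\ell)$ directly: the telescoping of the $\mu_{n-k+2}$-elimination produces exactly the partial sums defining $g_{k-1}(k-2), g_{k-1}(k-3),\ldots,g_{k-1}(2)$ on the successive factors $\mu_{n-k+4}/\mu_{n-k+3}, \mu_{n-k+5}/\mu_{n-k+4},\ldots,1/\mu_{n-1}$, matching the claimed right-hand side. (For $k=3$ the list of $\mu$'s is empty and the statement degenerates appropriately.)

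\textbf{Main obstacle.} The genuinely delicate part is not any single identity but the indexing: one must verify that at every step the factor selected really does have the form $1/((1-\lambda x_1)(1-x_2/\lambda)\cdots(1-x_k/\lambda))$ with exactly $k-1$ denominators involving the current $\lambda$, that the ``carry'' exponent attached to that $\lambda$ is $f_{k-1}$ of the right argument, and that the $k-1$ consecutive $f_{k-1}$-values being summed line up with the hypothesis of Lemma~\ref{lem 3} (argument $\geq k-1$), including at the transitional steps near $\lambda_{n-2k+2},\ldots,\lambda_{n-k+1}$ where the pattern of which denominators contain the eliminated variable changes and where the $\mu$-factors first start to interact. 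I would handle this by fixing notation once — writing the exponent of the numerator-factor of $\lambda_i$ as $e_i$, proving $e_{i+1}^{\mathrm{new}} = e_i + e_{i+1}^{\mathrm{old}}$ and the analogous rule for the trailing factors, then checking the boundary indices explicitly — rather than by a double induction on $n\geq k\geq 3$, which the paper explicitly wishes to avoid as ``unnecessarily messy.''
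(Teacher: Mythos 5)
Your proposal is correct and follows essentially the same route as the paper: iterated application of Lemma~\ref{lem 2} to eliminate $\lambda_1,\ldots,\lambda_{n-k+1}$ with the exponents tracked additively, Lemma~\ref{lem 3} collapsing each sum of $k-1$ consecutive $f_{k-1}$-values plus $1$ into a single $f_{k-1}$, and one final elimination of $\mu_{n-k+2}$ turning the exponent into $f_{k-1}(n)$ via $g_{k-1}(k-1)=f_{k-1}(n)-f_{k-1}(n-1)$. One minor slip in your narrative: the exponents $g_{k-1}(k-2),\ldots,g_{k-1}(2)$ on the factors $\mu_{n-k+4}/\mu_{n-k+3},\ldots,1/\mu_{n-1}$ are already produced by the $\lambda$-eliminations (each such factor carries a tail $\lambda_{n-k+1}\cdots\lambda_{\ell-k+1}$ in its denominator), not by the $\mu_{n-k+2}$-step, which only affects the $\mu_{n-k+3}$-factor — but your general carry rule for trailing factors yields exactly these partial sums, so the argument goes through.
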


\begin{proof}
We can sort the terms in the denominator of the given expression depending on their general form. We have
   \begin{align} \label{form 1} 1-q\frac{\lambda_{\ell}}{\lambda_{\ell-1}\cdots\lambda_{1}} \qquad (2\leq \ell \leq k), \end{align}
   \begin{align} \label{form 2} 1-q\frac{\lambda_{\ell}}{\lambda_{\ell-1}\cdots\lambda_{\ell-k+1}} \qquad (k+1\leq \ell \leq n-k+1),\end{align}
   or
    \begin{align} \label{form 3} 1-q\frac{\mu_{\ell}}{\mu_{\ell-1}\lambda_{n-k+1}\cdots\lambda_{\ell-k+1}} \qquad (n-k+3\leq \ell \leq n-1).\end{align}
This obviously excludes $1-q\lambda_1$,
\begin{align} \label{form separate}
1-q\frac{\mu_{n-k+2}}{\lambda_{n-k+1}\cdots\lambda_{n-2k+3}},
\end{align}
and
\begin{align} \label{form last}
    1-q\frac{1}{\mu_{n-1}\lambda_{n-k+1}},
\end{align}
but the first term is straightforward to treat separately and \eqref{form separate} is nothing but \eqref{form 2} in disguise (once we extend $\ell$ to $n-k+2$). In the same way, we might as well treat \eqref{form last} as being of the form \eqref{form 3} since $\underset{\geq}{\Omega}$ is not acting on $\mu_\ell$ for $\ell>n-k+2$ in this Proposition.

We start with \eqref{form 1}. Writing the exponent of $q$ in $1-q\lambda_1$ as $f_{k-1}(k-2)$, it is easy to see iteratively, using Lemma \ref{lem 2}, that we can associate a forward shift by $k-3$ between the variable given to $f_{k-1}(\cdot)$ and the index of the parameter in the numerator of each term of the general form \eqref{form 1}. Since there are $\ell-1$ parameters $\lambda_i$ in each denominator, we see using the same iterative reasoning with $\ell$ applications of Lemma \ref{lem 2} (acting on $\lambda_1,\ldots,\lambda_\ell$ in that order) that the exponent of $q$ will be given by the sum of $1$ with the $\ell-1$ preceding exponents. Since we have defined $f_{k-1}(i)=0$ when $i\leq k-3$, we might as well say that it is the sum of $1$ and the $k-1$ preceding ones where we substitute $k-\ell$ times a $0$ for the terms that don't initially appear; that is, the exponents will take the form
$$
1+f_{k-1}(\ell+k-4)+f_{k-1}(\ell+k-5)+\cdots+f_{k-1}(\ell-2),
$$
which is just $f_{k-1}(\ell+k-3)$ by Lemma \ref{lem 3}. 
The exact same reasoning works for \eqref{form 2} (but this time there is no need to add any $0$).

Finally, we treat \eqref{form 3}. Notice that each denominator in this case is composed of $n-\ell+1$ terms starting with $\lambda_{n-k+1}$ and decreasing from there (we are obviously not including the $\mu_i$ in this discussion since $\underset{\geq}{\Omega}$ will not be acting on them). Hence, we see by a repetition of our preceding argument that the exponent of $q$ will be the sum of $1$ with the $n-\ell+1$ preceding ones, starting at $f_{k-1}(n)$ by our $k-3$ shift acting first on $n-k+1$. This is precisely the definition of $g_{k-1}(n-\ell+2)$. Note that for $\ell=n-k+3$, this gives $g_{k-1}(k-1)$, which is just
$$
1+\sum_{i=2}^{k-1}f_{k-1}(n-i)=f_{k-1}(n)-f_{k-1}(n-1)$$ by Lemma \ref{lem 3}. Hence, to finish the proof, simply apply \eqref{Identity_k} once more, but this time to get rid of $\mu_{n-k+2}$.
\end{proof}
From the previous lemma, it is easy to see why the $h_{k-1}(\cdot)$ terms have to be dropped from Theorem \ref{Main} when $k=3$. In particular, its only two terms involving $\mu_i$ would be of the form \eqref{form separate} and \eqref{form last}, which would clearly not involve any $g_{k-1}(\cdot)$ after applying the $\underset{\geq}{\Omega}$ operator.

The following proposition gives us a final closed form for our generating function.

\begin{prop} \label{Prop 2}
For every $n\geq k\geq 3$, we have
$$
\sum_{N\geq k}p_k(N,n)q^N=
\prod_{i=k-2}^n \frac{1}{1-q^{f_{k-1}(i)}} \prod_{j=2}^{k-2}\frac{1}{1-q^{h_{k-1}(j)}}.
$$
\end{prop}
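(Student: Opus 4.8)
The plan is to start from the expression given by Proposition \ref{prop 1} and eliminate the remaining parameters $\mu_{n-k+3},\mu_{n-k+4},\ldots,\mu_{n-1}$ one at a time using MacMahon's reduction identity \eqref{Identity_2}. Recall that after Proposition \ref{prop 1} the generating function $\sum_{N\geq k}p_k(N,n)q^N$ equals
$$
\prod_{i=k-2}^{n-1}\frac{1}{1-q^{f_{k-1}(i)}}\;\underset{\geq}{\Omega}\frac{1}{\left(1-q^{f_{k-1}(n)}\mu_{n-k+3}\right)\left(1-q^{g_{k-1}(k-2)}\frac{\mu_{n-k+4}}{\mu_{n-k+3}}\right)\cdots\left(1-q^{g_{k-1}(2)}\frac{1}{\mu_{n-1}}\right)},
$$
and each variable $\mu_\ell$ appears in exactly two of the factors: once in the numerator and once in the denominator of a quotient, so the $\Omega_\geq$ operator acts on a product of the shape $\tfrac{1}{(1-\lambda x)(1-y/\lambda)}$ in the variable $\mu_{n-k+3}$ first, then $\mu_{n-k+4}$, and so on. Identity \eqref{Identity_2} tells us each such elimination replaces the pair of factors $\tfrac{1}{(1-x)(1-xy/\lambda)}$—wait, more precisely $\tfrac{1}{(1-\lambda x)(1-y/\lambda)}$—by $\tfrac{1}{(1-x)(1-xy)}$, so the exponent carried by the next variable gets augmented by the exponent we just removed.

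The key bookkeeping step is to track how the exponents of $q$ accumulate. When we eliminate $\mu_{n-k+3}$, the factor $1-q^{f_{k-1}(n)}\mu_{n-k+3}$ produces the standalone factor $1-q^{f_{k-1}(n)}$, and the factor $1-q^{g_{k-1}(k-2)}\tfrac{\mu_{n-k+4}}{\mu_{n-k+3}}$ becomes $1-q^{f_{k-1}(n)+g_{k-1}(k-2)}\mu_{n-k+4}$. Iterating, after $j-1$ eliminations the exponent on the surviving $\mu$-variable is $f_{k-1}(n)+g_{k-1}(k-2)+g_{k-1}(k-3)+\cdots+g_{k-1}(k+1-j)$, which is exactly the definition of $h_{k-1}(j)$:
$$
h_{k-1}(\ell)=f_{k-1}(n)+\sum_{j=2}^{\ell}g_{k-1}(k+1-j).
$$
So each elimination of $\mu_{n-k+2+j}$ (for $j=1,\ldots,k-3$) spins off a factor $\tfrac{1}{1-q^{h_{k-1}(j+1)}}$ (with $h_{k-1}(2)=f_{k-1}(n)$ corresponding to the very first elimination) and feeds the updated exponent into the next factor; the final elimination of $\mu_{n-1}$ against $1-q^{g_{k-1}(2)}/\mu_{n-1}$ produces the last factor $\tfrac{1}{1-q^{h_{k-1}(k-2)}}$. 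Combining with the prefactor $\prod_{i=k-2}^{n-1}\tfrac{1}{1-q^{f_{k-1}(i)}}$ from Proposition \ref{prop 1} and the freshly produced $\tfrac{1}{1-q^{f_{k-1}(n)}}=\tfrac{1}{1-q^{h_{k-1}(2)}}$ gives the claimed
$$
\prod_{i=k-2}^{n}\frac{1}{1-q^{f_{k-1}(i)}}\prod_{j=2}^{k-2}\frac{1}{1-q^{h_{k-1}(j)}},
$$
where I should be slightly careful about whether the $i=n$ term belongs to the first product or is identified with $h_{k-1}(2)$—either way the count of factors matches, and for $k=3$ there are no $\mu$-variables to eliminate beyond \eqref{form separate} and \eqref{form last}, so the $h_{k-1}$ product is empty, consistent with the statement of Theorem \ref{Main}.

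The main obstacle I anticipate is purely an indexing one: making sure the telescoping of exponents $g_{k-1}(k-2),g_{k-1}(k-3),\ldots,g_{k-1}(2)$ lines up precisely with the partial sums in the definition of $h_{k-1}$, and that the boundary cases (the first elimination giving $h_{k-1}(2)$, the last giving $h_{k-1}(k-2)$, and the degenerate case $k=3$) are handled without an off-by-one error. There is no analytic difficulty—convergence near $|\mu_i|=1$ is guaranteed as noted in the preliminaries, and \eqref{Identity_2} applies verbatim at each stage since no $\mu$-variable ever appears in more than the two required factors. Once Proposition \ref{Prop 2} is established, the proof of Theorem \ref{Main} is immediate: extract the coefficient of $q^N$, which by the partition interpretation (Lemma \ref{lem}) and the relation $\#\text{Set of positive outcomes}=k!\,\#S_k(N)$ equals $k!\,p_k(N,n)$; then \eqref{prob} gives the probability as $k!\,p_k(N,n)/\binom{N-1}{n-1}$, and letting $N\to\infty$ the number of partitions of $N$ into the fixed parts $f_{k-1}(k-2),\ldots,f_{k-1}(n),h_{k-1}(2),\ldots,h_{k-1}(k-2)$ grows like $N^{n-1}/\big((n-1)!\prod(\text{parts})\big)$ while $\binom{N-1}{n-1}\sim N^{n-1}/(n-1)!$, so the ratio tends to $n!/\big(f_{k-1}(k-2)\cdots f_{k-1}(n)\,h_{k-1}(2)\cdots h_{k-1}(k-2)\big)$.
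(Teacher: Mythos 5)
Your route is the same as the paper's: starting from Proposition \ref{prop 1}, eliminate $\mu_{n-k+3},\ldots,\mu_{n-1}$ one at a time with the reduction identity (the paper invokes Lemma \ref{lem 2}, whose two-factor case is \eqref{Identity_2}), letting the exponents of $q$ telescope. The mechanism you describe is right, but the bookkeeping you flag as ``an indexing concern'' is exactly where your write-up breaks, and as written it does not close. The correct instantiation of the definition is $h_{k-1}(\ell)=f_{k-1}(n)+\sum_{j=2}^{\ell}g_{k-1}(k-j)$ (every occurrence of the parameter $k$ must be replaced by $k-1$; you kept $k+1-j$), so in particular $h_{k-1}(2)=f_{k-1}(n)+g_{k-1}(k-2)$, \emph{not} $f_{k-1}(n)$. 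Consequently your identification $\frac{1}{1-q^{f_{k-1}(n)}}=\frac{1}{1-q^{h_{k-1}(2)}}$ is false (the $g$-values are positive; compare the $k=4$ example, where the extra factor is $1+f_3(n-2)+f_3(n)$, not $f_3(n)$), and the ``either way the count matches'' escape hatch fails: the $k-3$ eliminations of $\mu_{n-k+3},\ldots,\mu_{n-1}$ produce $k-2$ factors in total, and if the standalone factor $1-q^{f_{k-1}(n)}$ were counted as the $h_{k-1}(2)$ factor, the $k-3$ accumulated exponents would have to fill only the $k-4$ slots $h_{k-1}(3),\ldots,h_{k-1}(k-2)$ while the $i=n$ term of the first product would be unaccounted for.

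The consistent accounting, which is the paper's, is: the first elimination spins off $\frac{1}{1-q^{f_{k-1}(n)}}$ (this \emph{is} the $i=n$ factor of the first product) and turns the next factor into $1-q^{f_{k-1}(n)+g_{k-1}(k-2)}\mu_{n-k+4}=1-q^{h_{k-1}(2)}\mu_{n-k+4}$; thereafter the elimination of $\mu_{n-k+1+\ell}$ spins off $\frac{1}{1-q^{h_{k-1}(\ell)}}$ and feeds $h_{k-1}(\ell)+g_{k-1}(k-\ell-1)=h_{k-1}(\ell+1)$ forward, the final elimination against $1-q^{g_{k-1}(2)}\mu_{n-1}^{-1}$ yielding both $\frac{1}{1-q^{h_{k-1}(k-3)}}$ and $\frac{1}{1-q^{h_{k-1}(k-2)}}$, which gives exactly $\frac{1}{1-q^{f_{k-1}(n)}}\prod_{j=2}^{k-2}\frac{1}{1-q^{h_{k-1}(j)}}$. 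With this correction your argument coincides with the paper's proof. One further slip in your closing remarks about Theorem \ref{Main}: the unordering factor is $n!$ (all $n$ parts are ordered in $S_k(N)$), not $k!$; your final asymptotic ratio already uses $n!$, so only the intermediate claim needs fixing.
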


\begin{proof}
By Proposition \ref{prop 1}, it suffices to show that
\begin{align*}
   & \underset{\geq}{\Omega}\frac{1}{\left(1-q^{f_{k-1}(n)}\mu_{n-k+3}\right)\left(1-q^{g_{k-1}(k-2)}\frac{\mu_{n-k+4}}{\mu_{n-k+3}}\right)
\cdots
\left(1-q^{g_{k-1}(3)}\frac{\mu_{n-1}}{\mu_{n-2}}\right)
\left(1-q^{g_{k-1}(2)}\frac{1}{\mu_{n-1}}\right)} 
\end{align*}
is equal to
$$
\frac{1}{1-q^{f_{k-1}(n)}}\prod_{j=2}^{k-2}\frac{1}{1-q^{h_{k-1}(j)}}. 
$$
To deal with the first term $\left(1-q^{f_{k-1}(n)}\mu_{n-k+3}\right)^{-1}$, simply apply \eqref{Identity_k}. Next, since each $\mu_i$ appears exactly twice, every application of Lemma \ref{lem 2} will only affect the subsequent term of the general form
$$
\frac{1}{1-q^{g_{k-1}(k-\ell)\frac{\mu_{n-k+\ell+2}}{\mu_{n-k+\ell+1}}}}
$$
for some $2\leq \ell \leq k-2$. Thus, we see iteratively that after applying \eqref{Identity_k} $\ell$ times, the exponent of $q$ in such a term will take the form
$$
g_{k-1}(k-\ell)+f_{k-1}(n)+g_{k-1}(k-2)+\cdots+g_{k-1}(k-\ell+1)=h_{k-1}(\ell). \qedhere
$$ 
\end{proof}

To finish the proof of Lemma \ref{lem}, we simply need to observe that the expression given by Proposition \ref{Prop 2} is precisely the generating function for the number of partitions of $N$ into parts taken from the set of integers $$\{f_{k-1}\left(k-2\right),f_{k-1}\left(k-1\right),\ldots,f_{k-1}\left(n\right),h_{k-1}(2),h_{k-1}(3),\ldots,h_{k-1}(k-2)\}.$$

\subsection{Proof of Theorem \ref{Main}}
Obviously, the reasoning made in Example \ref{Example k=4} still applies here. \textit{Mutatis mutandis}, we obtain that the ordered ``Set of positive outcomes'' is 
$$
    S_k(N):=\left\{(a_1,\ldots,a_n)\in C(N,n):a_1\geq\ldots\geq a_n, a_{i}\geq \sum_{j=1}^{k-1}a_{i+j} \text{ for } i=1,\ldots,n-k+1\right\},
$$
while its associated generating function is
$$
G_k\left(q\right):=\sum_{N\geq k}\#S_k(N)q^N
=\sum_{\substack{a_1\geq a_2\geq \ldots \geq a_n\geq 1 \\ a_1\geq a_2+\cdots+a_k \\ \vdots \\ a_{n-k+1}\geq a_{n-k+2}+\cdots+a_n}}q^{a_1+\cdots+a_n}.
$$
It follows that $\#S_k(N)=p_k(N,n)$. But since we assume that the pieces are ordered, we must consider the permutations of the pieces as before. Hence,
$$
\#\text{Set of positive outcomes}=n!\#S_k(N)=n!\cdot 
p_k(N,n).
$$

It is a well known problem and a simple exercise in complex analysis to obtain asymptotics for the number of partitions with restricted summands. For instance, we have the following proposition.
\begin{prop} \cite[p.~258]{flajolet2009analytic}
The number of partitions of $n$ with summands restricted to a finite set of $r$ integers without a common divisor is asymptotically equal to
$$
\frac{1}{P}\frac{n^{r-1}}{(r-1)!},
$$
where $P$ is the product of those $r$ integers.
\end{prop}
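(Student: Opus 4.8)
The plan is to use the standard analytic method for coefficient asymptotics of rational generating functions, as developed in \cite{flajolet2009analytic}. Write the finite set of admissible summands as $\{a_1,\ldots,a_r\}$ with $\gcd(a_1,\ldots,a_r)=1$, and let $p(n)$ be the number of partitions of $n$ into parts from this set. Its generating function is the rational function
$$
F(q)=\sum_{n\geq 0}p(n)q^n=\prod_{i=1}^r\frac{1}{1-q^{a_i}},
$$
whose only singularities are poles located at roots of unity, so the method of partial fractions together with the transfer for rational functions applies directly.

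The first step is to pin down the pole structure. Each factor $1/(1-q^{a_i})$ has a simple pole at $q=1$, so $q=1$ is a pole of $F$ of order exactly $r$. For any other root of unity $\zeta\neq 1$, the factor $1/(1-q^{a_i})$ is singular at $\zeta$ only when $\zeta^{a_i}=1$; since $\gcd(a_1,\ldots,a_r)=1$, the integers $a_i$ cannot all be multiples of the order of $\zeta$, so at least one factor is regular at $\zeta$ and hence $\zeta$ is a pole of $F$ of order at most $r-1$. This is the only point at which the coprimality hypothesis is used, and I expect it to be the main obstacle — more precisely, the only genuinely nontrivial step — in the argument; everything else is routine coefficient extraction.

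The second step is to isolate the dominant term. Decompose $F(q)$ into its principal parts at the poles. The contribution of the order-$r$ pole at $q=1$ is $C/(1-q)^r$ together with terms $C_j/(1-q)^j$ for $j<r$, where
$$
C=\lim_{q\to 1}(1-q)^rF(q)=\prod_{i=1}^r\lim_{q\to 1}\frac{1-q}{1-q^{a_i}}=\prod_{i=1}^r\frac{1}{a_i}=\frac{1}{P}.
$$
Using $[q^n](1-q)^{-r}=\binom{n+r-1}{r-1}$ for the leading part, while $[q^n](1-q)^{-j}=\binom{n+j-1}{j-1}=O(n^{j-1})$ for the lower-order parts at $q=1$, and $[q^n](1-q/\zeta)^{-j}=\zeta^{-n}\binom{n+j-1}{j-1}=O(n^{r-2})$ for the contribution of every other pole $\zeta$ (each of order $\le r-1$, each giving a bounded oscillating coefficient), one obtains
$$
p(n)=\frac{1}{P}\binom{n+r-1}{r-1}+O(n^{r-2})=\frac{1}{P}\,\frac{n^{r-1}}{(r-1)!}+O(n^{r-2}),
$$
which yields the stated asymptotic equivalence. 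Since this is a classical fact, we simply cite it from \cite[p.~258]{flajolet2009analytic} rather than reproducing the details.
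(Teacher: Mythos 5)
Your proposal is correct and is exactly the classical partial-fraction argument that the paper itself defers to by citing \cite[p.~258]{flajolet2009analytic} without proof: the coprimality hypothesis forces every pole away from $q=1$ to have order at most $r-1$, and the order-$r$ pole at $q=1$ with residue-type constant $1/P$ gives the stated leading term. Nothing to add.
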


Since the set 
$$\{f_{k-1}\left(k-2\right),f_{k-1}\left(k-1\right),\ldots,f_{k-1}\left(n\right),h_{k-1}(2),h_{k-1}(3),\ldots,h_{k-1}(k-2)\}$$
contains $n$ integers without a common divisor (just notice $1$ is in the set), it follows that 
$$
\#S_k(N)\sim \frac{1}{P}\frac{N^{n-1}}{(n-1)!},
$$
where $$P=f_{k-1}\left(k-2\right)f_{k-1}\left(k-1\right)\cdots f_{k-1}\left(n\right)h_{k-1}(2)h_{k-1}(3)\cdots h_{k-1}(k-2).$$
Overall, using that for fixed $s$,
$$
\binom{t}{s}\sim\frac{t^s}{s!},
$$
we get 
$$
\frac{\#\text{Set of positive outcomes}}{\#\text{Set of all outcomes}} \sim \frac{n!}{P}\frac{N^{n-1}}{\left(n-1\right)!}\binom{N-1}{n-1}^{-1}
\sim \frac{n!}{P} \left(\frac{N}{N-1}\right)^{n-1},
$$
which goes to the claimed expression as $N\to\infty$.

\section*{Acknowledgments}
The author would like to thank G.E. Andrews for kindly answering many questions about MacMahon's Partition Analysis.

\bibliographystyle{abbrv}
\bibliography{references.bib}

\end{document}